\newcommand{\noun}[1]{\textsc{#1}}
\theoremstyle{plain}
\newtheorem{thm}{\protect\theoremname}
  \theoremstyle{definition}
  \newtheorem{defn}[thm]{\protect\definitionname}
  \theoremstyle{plain}
  \newtheorem{lem}[thm]{\protect\lemmaname}
  \providecommand{\definitionname}{Definition}
  \providecommand{\lemmaname}{Lemma}
\providecommand{\theoremname}{Theorem}
\begin{document}

\title{Weak Solution of a Doubly Degenerate Parabolic Equation}

\author{Di Kang, Tharathep Sangsawang and Jialun Zhang}
\maketitle
\begin{abstract}
This paper studies a fourth-order, nonlinear, doubly-degenerate parabolic equation derived from the thin film equation in spherical geometry. A regularization method is used to study the equation and several useful estimates are obtained. The main result of this paper is a proof of the existence of a weak solution of the equation in a weighted Sobolev space. 
\end{abstract}

\section{Introduction}

The classical thin film equation of the form 
\begin{equation}
u_{t}+\left(|u|^{n}u_{xxx}\right)_{x}=0,\label{eq:Bernis eqn}
\end{equation}
describes the behavior of a thin viscous film on a flat surface under
the effect of surface tension. In 1990, Bernis and Friedman \cite{bernis1990higher}
published their pioneering analytical result on equation (\ref{eq:Bernis eqn}).
This equation is a degenerate parabolic equation since $|u|^{n}=0$
when $u=0$. Therefore, Bernis and Friedman \cite{bernis1990higher} used a regularization
method and modified the equation to 
\[
u_{t}+\left(\left(|u|^{n}+\epsilon\right)u_{xxx}\right)_{x}=0,
\]
with a small positive parameter $\epsilon$, on which the Schauder estimates in \cite{friedman1958interior} can be
applied. Bernis and Friedman \cite{bernis1990higher} proved the existence
of non-negative weak solutions of equation (\ref{eq:Bernis eqn}) when
$n\geq1$. In addition, they showed that for $n\geq4$, with a positive
initial condition, there exists a unique positive classical solution.
In 1994, Bertozzi et al. \cite{bertozzi1994singularities} generalized this result to $n\geq\frac{7}{2}$. In 1995, Beretta et al. \cite{beretta1995nonnegative}
showed that this positivity-preserving property holds for almost every
time $t$ in the case $n\geq2$. Regarding the long time behavior, Carrillo
and Toscani \cite{carrillo2002long} proved the convergence to a self-similar
solution for equation (\ref{eq:Bernis eqn}) with $n=1$ and Carlen
and Ulusoy \cite{carlen2007asymptotic} gave an upper bound on the
distance from the self-similar solution. A similar result on a cylindrical
surface was obtained in \cite{burchard2012convergence}.

In this paper, we consider the related equation 
\begin{equation}
u_{t}+\left(|u|^{n}\left(1-x^{2}\right)\left(\left(1-x^{2}\right)u_{x}\right)_{xx}\right)_{x}=0,\quad \text{in}\ \Omega\times(0,T_{0})\label{eq:main eqn}
\end{equation}
where $n\geq1$ and $\Omega=(-1,1)$. This equation arises from the
thin film equation in spherical geometry. In \cite{kangcoatingsphere},
the authors examined the dynamics of a thin liquid film coating
the outer surface of a sphere rotating around its vertical axis in
the presence of gravity. The evolution equation describing the thickness
of the thin film was shown to be given by 
\[
h_{t}+\frac{1}{\sin\theta}\left[\sin\theta \, h^{3}\left(a\sin\theta+b\sin\theta\cos\theta+c\left(2h+\frac{1}{\sin\theta}\left(\sin\theta \, h_{\theta}\right)_{\theta}\right)_{\theta}\right)\right]_{\theta}=0,
\]
where $\theta$ is the polar coordinate ($0<\theta<\pi$), $h(\theta,t)$ is the thickness
of the thin film and $a$, $b$ and $c$ are parameters describing
the effects of gravity, rotation and surface tension, respectively. Using the change
of variable $x=-\cos\theta$, the equation becomes:
\begin{equation}
u_{t}+\left[u^{3}\left(1-x^{2}\right)\left(a-bx+c\left(2h+\left((1-x^{2})u_{x}\right)_{x}\right)_{x}\right)\right]_{x}=0\label{eq:physical eqn} \,.
\end{equation}
Equation (\ref{eq:main eqn}) results from collecting the highest
order terms of equation (\ref{eq:physical eqn}) and replacing $u^3$ with $|u|^n$ to be able study the role of exponent $n$ as is standard practice for the thin-film equation.

Equation (\ref{eq:main eqn}) is somewhat similar to equation (\ref{eq:Bernis eqn}).
However, (\ref{eq:main eqn}) loses its parabolicity at $x=\pm1$
or when $u=0$, which makes it a doubly-degenerate parabolic
equation. In this paper, we will use a similar regularization method
to prove the existence of non-negative weak solutions of the equation.
Since the equation is doubly degenerate, the existence can be proven
only in a weighted Sobolev space.

\section{Weighted Sobolev Space}

In this section, we will give the definition of weighted Sobolev spaces
and provide two related lemmas which will be used in section three. 
\begin{defn}
\noun{{[}Weighted Sobolev space{]}} Let $\Omega$ be a bounded interval
and $w(x)$, $v_{0}(x)$ and $v_{1}(x)$ be nonnegative continuous
functions defined on $\Omega$. The weighted $L^{2}$ space with weight
$w$ is defined as 
\[
L_{w}^{2}(\Omega)=\left\{ u~\left|~\int_{\Omega}u(x)^{2}w(x)~dx<\infty\right.\right\} 
\]
where the weighted $L^{2}$ norm is given by 
\[
||u||_{L_{w}^{2}}=\left(\int_{\Omega}u(x)^{2}w(x)~dx\right)^{1/2}.
\]
The weighted Sobolev space $W^{1,p}(\Omega;v_{0},v_{1})$ is the set
of functions $u$ such that the norm 
\[
||u||_{1,p,v_{0},v_{1}}:=\left(\int_{\Omega}|u|^{p}v_{0}~dx+\int_{\Omega}|\nabla u|^{p}v_{1}~dx\right)^{\frac{1}{p}}
\]
is finite. Specifically, the weighted $H^{1}$ space with weight $w$
is defined as 
\[
H_{w}^{1}(\Omega)=\left\{ u\left|||u||_{H_{w}^{1}}=||u||_{L_{w}^{2}}+||u_{x}||_{L_{w}^{2}}<\infty\right.\right\} .
\]

\end{defn}
For the purposes of this paper, we take $\Omega=(-1,1)$ and $w(x)=1-x^{2}$.
In order to prove the existence of a weak solution in $H_{w}^{1}(\Omega)$,
we will use two lemmas. The first is the generalized Nirenberg inequality 
\cite{kohn1984first} given below. 
\begin{lem}
\label{lem:nirenberg}Let $p$, $q$, $r$, $\alpha$, $\beta$, $\gamma$,
$\sigma$ and $a$ be real numbers satisfying $p,q\geq1$, $r>0$
, $0\leq a\leq1$, $\gamma=a\sigma+(1-a)\beta$, $\frac{1}{p}+\frac{\alpha}{n}>0$,
$\frac{1}{q}+\frac{\beta}{n}>0$ and $\frac{1}{r}+\frac{\gamma}{n}>0$.
There exists a positive constant $C$ such that the following inequality
holds for all $u\in C_{0}^{\infty}(\mathbb{\mathbb{R}}^{n})$ ($n\geq1$)
\[
\left\Vert |x|^{\gamma}u\right\Vert _{L^{r}}\leq C\left\Vert |x|^{\alpha}|Du|\right\Vert _{L^{p}}^{a}\left\Vert |x|^{\beta}u\right\Vert _{L^{q}}^{1-a}
\]
if and only if 
\[
\frac{1}{r}+\frac{\gamma}{n}=a\left(\frac{1}{p}+\frac{\alpha-1}{n}\right)+(1-a)\left(\frac{1}{q}+\frac{\beta}{n}\right)
\]
and 
\[
\left\{ \begin{array}{ccccc}
1\leq\alpha-\sigma & \text{if} & a>0\\
\alpha-\sigma\leq1 & \text{if} & a>0 & \text{and} & \frac{1}{p}+\frac{\alpha-1}{n}=\frac{1}{r}+\frac{\gamma}{n}.
\end{array}\right.
\]

\end{lem}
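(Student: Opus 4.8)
The statement is the Caffarelli--Kohn--Nirenberg interpolation inequality, and since the assertion is an ``if and only if,'' my plan splits into proving that the two displayed conditions are \emph{necessary} (via scaling) and \emph{sufficient} (via a representation formula followed by Hölder interpolation). First I would dispose of necessity by dilation invariance. Substituting $u_{\lambda}(x)=u(\lambda x)$ into each of the three norms and changing variables $y=\lambda x$, one finds that a generic weighted norm $\||x|^{\delta}v\|_{L^{s}}$ scales like $\lambda^{-(\delta+n/s)}$, with the gradient norm acquiring an additional factor $\lambda$ because $Du_{\lambda}=\lambda\,(Du)(\lambda\,\cdot)$. Matching the total power of $\lambda$ on both sides, so that the constant $C$ can be chosen independent of $\lambda$, forces
\[
\frac{1}{r}+\frac{\gamma}{n}=a\left(\frac{1}{p}+\frac{\alpha-1}{n}\right)+(1-a)\left(\frac{1}{q}+\frac{\beta}{n}\right),
\]
which is exactly the stated dimensional identity. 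Testing the inequality against bump functions that concentrate near the origin and then near infinity isolates the two borderline regimes and yields the side conditions involving $\alpha-\sigma$.

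For sufficiency I would first establish the pure-gradient endpoint, which is essentially a weighted Hardy--Sobolev inequality, using integration along rays. For $u\in C_{0}^{\infty}(\mathbb{R}^{n})$ the fundamental theorem of calculus along the ray through $x$ gives
\[
u(x)=-\int_{1}^{\infty}\frac{d}{ds}u(sx)\,ds=-\int_{1}^{\infty}x\cdot\nabla u(sx)\,ds,
\]
so that $|u(x)|\le|x|\int_{1}^{\infty}|\nabla u(sx)|\,ds$ (the boundary term at $s=\infty$ vanishes by compact support). Inserting the weight $|x|^{\gamma}$, taking the $L^{r}$ norm, and passing to polar coordinates reduces the estimate to a one-dimensional weighted Hardy inequality on $(0,\infty)$; the condition $1\le\alpha-\sigma$ is precisely what makes the ray integral converge and the one-dimensional Hardy inequality valid with the sharp weight exponents.

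To reach general $0<a<1$ I would apply this endpoint inequality not to $u$ but to $|u|^{s}$ for a suitable power $s>1$, using $|\nabla(|u|^{s})|=s\,|u|^{s-1}|\nabla u|$, and then split the resulting integral by weighted Hölder into one factor controlled by $\||x|^{\alpha}|Du|\|_{L^{p}}$ and another controlled by $\||x|^{\beta}u\|_{L^{q}}$. The exponent $s$ together with the Hölder exponents are pinned down by the dimensional identity and by the relation $\gamma=a\sigma+(1-a)\beta$; making these consistent choices produces exactly the powers $a$ and $1-a$ and completes the argument. The usual density argument extends the bound from $C_{0}^{\infty}(\mathbb{R}^{n})$ to all admissible $u$.

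The main obstacle is the careful bookkeeping of the weights in the two borderline regimes, near the origin and near infinity, where the weighted integrals are only conditionally convergent. This is exactly where the side conditions $1\le\alpha-\sigma$ and, in the exponent-equality case, $\alpha-\sigma\le1$ become indispensable: verifying the one-dimensional Hardy inequality at the sharp weight exponents and justifying the ray representation when the weights are singular at the origin form the delicate technical core, whereas the scaling and Hölder steps are comparatively routine.
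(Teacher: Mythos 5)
The paper does not prove this lemma at all: it is quoted verbatim (as the ``generalized Nirenberg inequality'') from Caffarelli, Kohn and Nirenberg \cite{kohn1984first}, so there is no in-paper argument to compare yours against. Your outline does follow the broad strategy of the original proof --- dilation invariance for the dimensional balance, a ray/Hardy-type representation plus H\"older interpolation for sufficiency --- but as written it has gaps that go beyond ``routine bookkeeping.''

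On necessity: scaling $u_\lambda(x)=u(\lambda x)$ genuinely forces the dimensional identity, and that part is fine. But the sentence ``testing against bump functions that concentrate near the origin and then near infinity isolates the two borderline regimes and yields the side conditions'' is an assertion, not an argument. The conditions on $\alpha-\sigma$ are \emph{not} dilation phenomena; they come from testing with bumps concentrated at scale $\varepsilon$ around a point $x_0\neq 0$ (so that the weights $|x|^{\alpha},|x|^{\beta},|x|^{\gamma}$ are effectively constants $|x_0|^{\alpha}$, etc.) and comparing with the unweighted Gagliardo--Nirenberg scaling; this two-parameter family (location and width) is a separate computation you have not set up. On sufficiency: the claim that ``$1\le\alpha-\sigma$ is precisely what makes the ray integral converge'' does not match the structure of the result. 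In the standard statement the conditions are $0\le\alpha-\sigma$ when $a>0$ and $\alpha-\sigma\le 1$ only in the critical case $\frac1p+\frac{\alpha-1}{n}=\frac1r+\frac{\gamma}{n}$ (the ``$1\le\alpha-\sigma$'' in the paper's display is almost certainly a typo for ``$0\le\alpha-\sigma$''); your attempt to read the printed inequality as a convergence criterion for $\int_1^\infty|\nabla u(sx)|\,ds$ is therefore justifying the wrong condition for the wrong reason. Finally, the passage from the pure-gradient endpoint to general $0<a<1$ via $|u|^{s}$ and H\"older is the right idea in one parameter regime, but the actual proof requires a case analysis (e.g.\ whether $\gamma$ lies between $\sigma$ and $\beta$, and the critical versus noncritical cases), and without specifying $s$ and the H\"older exponents and checking they are admissible, the ``pinned down by the dimensional identity'' step is where the real work lives. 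In short: correct skeleton, but the two delicate points you flag at the end are exactly the points your sketch does not actually resolve; for the purposes of this paper the honest move is to cite \cite{kohn1984first} rather than reprove the lemma.
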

The next lemma is an embedding theorem from weighted Sobolev spaces
to weighted continuous function spaces \cite{brown1992embeddings}.
Before introducing the lemma, we give several more definitions. 
\begin{defn}
\noun{{[}Weighted H\"older space{]}} Let $\Omega$ be a bounded interval
and $w_{0}(x)$ and $w_{1}(x)$ be nonnegative continuous functions
defined on $\Omega$. The weighted H\"older space $C^{\lambda}(\Omega;w_{0},w_{1})$
is the set of functions $u$ such that the norm 
\[
||u||_{C(\Omega;w_{0,},w_{1})}:=\sup_{t\in\Omega}|u(t)w_{0}(t)|+\sup_{t\in\Omega}H_{\lambda}(t,u)
\]
is finite, where 
\[
H_{\lambda}(t,u)=\sup_{s\neq t}\left(w_{1}(s)\frac{|u(s)-u(t)|}{|s-t|^{\lambda}}\right).
\]
With the definitions above, we have the following lemma.\end{defn}
\begin{lem}
\label{lem:embedding}\noun{{[}Embedding Theorem{]}} Let $p\in(n,\infty)$,
$\lambda\in(0,1-\frac{n}{p}]$, and let $w_{i}(x),$ $v_{i}(x)$ and
$r(t)$ be nonnegative continuous functions for $i=1,2$. Define 
\[
D_{w}(t):=\sup_{s\in B(t,r(t))}\frac{w(s)}{w(t)},\quad C_{w}(t):=\inf_{s\in B(t,r(t))}\frac{w(s)}{w(t)},\quad S_{j}^{\lambda i}\{r,w_{i},v_{j}\}(t):=r(t)^{pj-n-\lambda pi}\frac{w_{i}^{p}(t)}{v_{j}(t)}.
\]
If 
\[
\sup_{t}\left(\frac{D_{w_{0}}^{p}(t)}{C_{v_{i}}(t)}S_{i}\{r,w_{0},v_{i}\}(t)\right)<\infty,\ i=0,1
\]
and 
\[
\sup_{t}\left(\frac{D_{w_{1}}^{p}(t)}{C_{v_{i}}(t)}S_{i}\{r,w_{1},v_{i}\}(t)\right)<\infty,\ i=0,1
\]
then 
\[
W^{1,p}(\Omega;v_{0},v_{1})\hookrightarrow C^{\lambda}(\Omega;w_{0},w_{1}).
\]

\end{lem}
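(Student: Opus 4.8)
The lemma is a weighted Morrey-type embedding, so the engine I would use is the classical Sobolev--Morrey inequality for $p>n$ applied on the local balls $B(t,r(t))$, with the weights extracted afterwards through $D_w$, $C_v$ and $S_j$. The plan is to bound the two constituents of the target norm separately---the weighted supremum $\sup_t|u(t)w_0(t)|$ and the weighted H\"older seminorm $\sup_t H_\lambda(t,u)$---each by $\|u\|_{1,p,v_0,v_1}$ with a constant independent of $t$. The single computational device used repeatedly is that, since $C_v(t)=\inf_{s\in B(t,r(t))}v(s)/v(t)$, any unweighted local $L^p$ norm converts into a global weighted one via
\[
\int_{B(t,r(t))}|f|^p\,dx\le\frac{1}{v(t)\,C_v(t)}\int_\Omega|f|^p\,v\,dx.
\]

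For the supremum part, I would fix $t$ and apply Morrey's inequality on $B=B(t,r(t))$ in the scaled form
\[
\|u\|_{L^\infty(B)}\le C\Big(r(t)^{-n/p}\|u\|_{L^p(B)}+r(t)^{\,1-n/p}\|\nabla u\|_{L^p(B)}\Big),
\]
with $C=C(n,p)$. Multiplying by $w_0(t)$ and applying the conversion above with $v_0$ to the first term and $v_1$ to the second, the coefficient of $\big(\int_\Omega|u|^p v_0\big)^{1/p}$ equals $C\big(S_0\{r,w_0,v_0\}(t)/C_{v_0}(t)\big)^{1/p}$ and that of $\big(\int_\Omega|\nabla u|^p v_1\big)^{1/p}$ equals $C\big(S_1\{r,w_0,v_1\}(t)/C_{v_1}(t)\big)^{1/p}$. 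Because $D_{w_0}(t)\ge1$, both are dominated by the suprema in the first hypothesis, giving $\sup_t|u(t)w_0(t)|\le C\|u\|_{1,p,v_0,v_1}$.

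For the H\"older part, I would fix $t$ and split over $s$. In the near regime $|s-t|\le r(t)$ we have $s\in B$, and Morrey's oscillation bound gives $|u(s)-u(t)|\le C|s-t|^{\,1-n/p}\|\nabla u\|_{L^p(B)}$. Since $\lambda\le 1-n/p$, I would write $|s-t|^{\,1-n/p}=|s-t|^\lambda|s-t|^{\,1-n/p-\lambda}\le|s-t|^\lambda\,r(t)^{\,1-n/p-\lambda}$; dividing by $|s-t|^\lambda$, estimating $w_1(s)\le D_{w_1}(t)w_1(t)$, and converting through $C_{v_1}$ then bounds $w_1(s)|u(s)-u(t)|/|s-t|^\lambda$ by $C\big(D_{w_1}^p(t)\,S_1\{r,w_1,v_1\}(t)/C_{v_1}(t)\big)^{1/p}\big(\int_\Omega|\nabla u|^p v_1\big)^{1/p}$, which is finite by the $i=1$ case of the second hypothesis. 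In the far regime $|s-t|>r(t)$ one has $|s-t|^{-\lambda}<r(t)^{-\lambda}$, so the triangle inequality $|u(s)-u(t)|\le|u(s)|+|u(t)|$ reduces the difference quotient to weighted pointwise values of $u$, which are in turn controlled by the supremum estimate of the previous paragraph. The extra factor $r(t)^{-\lambda p}$ this produces is exactly the one built into $S_0\{r,w_1,v_0\}(t)=r(t)^{-n-\lambda p}w_1^p(t)/v_0(t)$, so the far contribution is absorbed by the $i=0$ case of the second hypothesis. Taking suprema over $s$ and then $t$, and combining with the supremum part, yields the claimed embedding.

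The main obstacle I anticipate is precisely the far regime of the H\"older seminorm: when $s$ and $t$ are widely separated, neither ball $B(t,r(t))$ nor $B(s,r(s))$ contains the other point, the radii $r(s)$ and $r(t)$ need not be comparable, and the ratio $w_1(s)/w_1(t)$ is no longer controlled by $D_{w_1}$. Making the heuristic above rigorous therefore requires a chaining or covering argument that passes through the global weighted supremum rather than a single local Morrey inequality. This is further complicated in the intended application, where $v_i$ and $w_i$ are powers of $1-x^2$ that vanish at $x=\pm1$: there $r(t)$ must be chosen to shrink as $t\to\pm1$ so that $C_{v_i}(t)$ does not collapse and $u$ stays in the \emph{unweighted} $W^{1,p}(B(t,r(t)))$ that Morrey requires. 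Checking that one admissible choice of $r(t)$ keeps all four families of suprema finite simultaneously is the delicate bookkeeping at the heart of the argument; note that only the sufficiency of the hypotheses is needed, so the sharpness (``only if'') half does not enter.
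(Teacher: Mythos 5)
The paper does not actually prove this lemma: it is quoted (with some typographical corruption of the $S_j^{\lambda i}$ indices) from Brown and Opic \cite{brown1992embeddings}, so there is no internal proof to compare against. Judged on its own terms, your sketch follows what is indeed the standard strategy behind such embeddings --- localize to the balls $B(t,r(t))$, apply the unweighted Morrey inequality there, and transfer the weights through $C_{v_j}$, $D_{w_i}$ and the $S$-functionals. The supremum estimate and the near regime $|s-t|\le r(t)$ of the H\"older seminorm are correct as written, and your identification of the Morrey coefficients with $\left(S_j/C_{v_j}\right)^{1/p}$ is exactly the right bookkeeping.

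The far regime $|s-t|>r(t)$ is, however, a genuine gap, and your own closing paragraph concedes as much. Two things fail there. First, the bound $w_1(s)\le D_{w_1}(t)\,w_1(t)$ is valid only for $s\in B(t,r(t))$, i.e.\ precisely in the near regime; for distant $s$ the hypotheses give no control on the ratio $w_1(s)/w_1(t)$, and in the intended application ($w_1=1-x^2$) that ratio is unbounded as $t\to\pm1$ with $s$ fixed in the interior. Second, after the triangle inequality the term $w_1(s)|u(t)|$ mixes the weight at $s$ with the value of $u$ at $t$, while the supremum estimate you established controls $w_0|u|$, not $w_1|u|$; the factor $r(t)^{-\lambda p}$ built into $S_0\{r,w_1,v_0\}$ does not by itself repair this mismatch. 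Closing the argument requires the covering and chaining construction of Brown and Opic (a locally finite cover by balls $B(t_k,r(t_k))$ with controlled overlap, together with admissibility conditions on $r$ making consecutive radii and weight values comparable), which you correctly name but do not carry out. So the proposal is a sound outline of the right method with the hard half of the H\"older estimate left open --- tolerable here only because the paper itself treats the lemma as a black box.
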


\section{Regularization and Existence of Weak Solution}

In this section, we study the equation 
\[
u_{t}+\left(|u|^{n}(1-x^{2})\left((1-x^{2})u_{x}\right)_{xx}\right)_{x}=0,\ x\in\Omega=(-1,1),\ t\in(0,T_{0})
\]
where $n\geq1$ and $T_{0}>0$, with the initial condition 
\begin{equation}
u(x,0)=u_{0}(x)\in H^{1}(-1,1)\label{eq:IC}
\end{equation}
and boundary conditions 
\begin{equation}
(1-x^{2})u_{x}=(1-x^{2})\left((1-x^{2})u_{x}\right)_{xx}=0\ \text{at}\ x=\pm1.\label{eq:BC}
\end{equation}
Since this equation is doubly degenerate, we use $\epsilon$ and $\delta$
to regularize the equation as 
\begin{equation}
(u_{\epsilon,\delta})_{t}+\left(\left((u_{\epsilon,\delta})^{n}+\epsilon\right)(1-x^{2}+\delta)\left((1-x^{2}+\delta)(u_{\epsilon,\delta})_{x}\right)_{xx}\right)_{x}=0,\ x\in\Omega=(-1,1),\ t\in(0,T_{0})\label{eq:regularized eqn}
\end{equation}
where $\epsilon,\delta>0$. The corresponding regularized initial
and boundary conditions become 
\[
u_{\epsilon,\delta}(x,0)=u_{\epsilon,\delta0}(x)\in H^{1}(-1,1)
\]
and 
\[
(u_{\epsilon,\delta})_{x}=\left((1-x^{2}+\delta)(u_{\epsilon,\delta})_{x}\right)_{xx}=0\ \text{at}\ x=\pm1.
\]

The existence of a solution of (\ref{eq:regularized eqn}) in a small
time interval is guaranteed by the Schauder estimates in \cite{friedman1958interior}.
Now suppose that $u_{\epsilon,\delta}$ is a solution of equation
(\ref{eq:regularized eqn}) and that it is continuously differentiable
with respect to the time variable and fourth order continuously differentiable
with respect to the spatial variable. In order to get an \textit{a
priori} estimation of $u_{\epsilon,\delta}$, we multiply both sides
of equation (\ref{eq:regularized eqn}) by $\left((1-x^{2}+\delta)(u_{\epsilon,\delta})_{x}\right)_{x}$
and integrate over $\Omega\times(0,T)$. This gives us

\begin{align*}
 & \int_{0}^{T}\int_{-1}^{1}\left[\left((u_{\epsilon,\delta})^{n}+\epsilon\right)(1-x^{2}+\delta)\left((1-x^{2}+\delta)(u_{\epsilon,\delta})_{x}\right)_{xx}\right]\left((1-x^{2}+\delta)(u_{\epsilon,\delta})_{x}\right)_{x}~dxdt\\
 & +\int_{0}^{T}\int_{-1}^{1}(u_{\epsilon,\delta})_{t}\left((1-x^{2}+\delta)(u_{\epsilon,\delta})_{x}\right)_{x}~dxdt=0.
\end{align*}
Integrating by parts, we get 
\begin{align}
 & \frac{1}{2}\int_{0}^{T}\int_{-1}^{1}(1-x^{2}+\delta)\frac{d}{dt}|(u_{\epsilon,\delta})_{x}|^{2}~dxdt\nonumber \\
 & +\int_{0}^{T}\int_{-1}^{1}\left((u_{\epsilon,\delta})^{n}+\epsilon\right)(1-x^{2}+\delta)\left((1-x^{2}+\delta)(u_{\epsilon,\delta})_{x}\right)_{xx}^{2}~dxdt=0.\label{eq:estimate IBP}
\end{align}
The second term of equation (\ref{eq:estimate IBP}) is non-negative
and the first term is non-positive. It follows that 
\begin{equation}
\int_{-1}^{1}(1-x^{2}+\delta)(u_{\epsilon,\delta})_{x}^{2}~dx\le\int_{-1}^{1}(1-x^{2}+\delta)(u_{\epsilon,\delta0})_{x}^{2}~dx\leq C_{0}\label{eq:estimate ux bound}
\end{equation}
where $C_{0}>0$ is some constant. This shows that the family of functions
$\{u_{\epsilon,\delta}\}$ is uniformly bounded in the $L_{w}^{2}$
norm.

To obtain a uniform bound for $\{u_{\epsilon,\delta}\}$, we apply
lemma \ref{lem:nirenberg} and set the parameters as follows: 
\[
n=1,\ \gamma=\frac{1}{2},\ r=p=2,\ a=1,\ \sigma=\frac{1}{2},\ \alpha=\frac{3}{2}.
\]
Then we obtain the inequality 
\begin{equation}
\big\| x^{\frac{1}{2}}u\big\|_{L^{2}}\leq C\big\| x^{\frac{3}{2}}u_{x}\big\|_{L^{2}}.\label{eq:nirenberg with parameters}
\end{equation}
Combining (\ref{eq:estimate ux bound}) and (\ref{eq:nirenberg with parameters}),
we have 
\begin{align}
\int_{-1}^{1}(1-x^{2})(u_{\epsilon,\delta})^{2}dx & \leq\frac{1}{2}\int_{-1}^{0}(1+x)(u_{\epsilon,\delta})^{2}+\frac{1}{2}\int_{0}^{1}(1-x)(u_{\epsilon,\delta})^{2}dx\nonumber \\
 & \leq\frac{1}{2}C^{2}\left(\int_{-1}^{0}(1+x)^{3}(u_{\epsilon,\delta})_{x}^{2}dx+\int_{0}^{1}(1-x)^{3}(u_{\epsilon,\delta})_{x}^{2}dx\right)\nonumber \\
 & \leq C_{1}\int_{-1}^{1}(1-x^{2})(u_{\epsilon,\delta})_{x}^{2}dx\leq C_{2}.\label{eq:uL2w bound}
\end{align}
where $C,C_{1},C_{2}>0$ are constants. Hence $\{u_{\epsilon,\delta}\}$
is uniformly bounded.

Next we take $n=1$, $p=2$, $\lambda=\frac{1}{2}$, $r=1$, $w_{0}=v_{0}=1$
and $w_{1}=v_{1}=w=1-x^{2}$ in Lemma \ref{lem:embedding}. It is
easy to check that with these parameters and weights, the conditions
of Lemma \ref{lem:embedding} hold. Thus we have 
\begin{equation}
H_{w}^{1}(\Omega)\hookrightarrow C_{w}^{1/2}(\Omega),\label{eq:embedding}
\end{equation}
which means that 
\begin{equation}
\left|(1-x^{2})\left(u_{\epsilon,\delta}(x_{1},t)-u_{\epsilon,\delta}(x_{2},t)\right)\right|\leq C_{3}|x_{1}-x_{2}|^{1/2},~\forall x_{1},x_{2}\in\Omega.\label{eq:equivcont x}
\end{equation}
Using the same method as \cite[Lemma 2.1]{bernis1990higher}, we can
prove similarly that 
\begin{equation}
\left|(1-x^{2})\left(u_{\epsilon,\delta}(x,t_{1})-u_{\epsilon,\delta}(x,t_{2})\right)\right|\leq C_{3}|t_{1}-t_{2}|^{1/8},\ \forall t_{1},t_{2}\in(0,T).\label{eq:equivcont t}
\end{equation}
The inequalities (\ref{eq:equivcont x}) and (\ref{eq:equivcont t})
show the existence of an weighted upper bound on the $C_{x,t,w}^{1/2,1/8}$-norm
of $u_{\epsilon,\delta}$ that is independent of $\epsilon$ and $\delta$.
By the Arzel\`{a}-Ascoli theorem, this equicontinuous property, together with the uniformly boundedness
of the weighted $H^{1}$ norm given by (\ref{eq:estimate ux bound}) and
(\ref{eq:uL2w bound}), shows that as $\epsilon\rightarrow0$ and
$\delta\rightarrow0$, every sequence $\{u_{\epsilon,\delta}\}$ has
a subsequence $\{\tilde{u}_{\epsilon,\delta}\}$ such that 
\begin{equation}
\tilde{u}_{\epsilon,\delta}\rightarrow u\label{eq:uniformly convergence}
\end{equation}
uniformly in $\Omega\times(0,T)$. We establish in the following theorem
that this $u$ is a weak solution of (\ref{eq:main eqn}). 
\begin{thm}
Any function $u$ obtained by (\ref{eq:uniformly convergence}) has
the following properties:

(1) $u$ is continuous in weighted H\"older space in $x$ with order $\frac{1}{2}$ and
in $t$ with order $\frac{1}{8}$.

(2) $u$ satisfies the boundary conditions (\ref{eq:BC}) and initial
condition (\ref{eq:IC}),

(3) $u$ is a weak solution of equation (\ref{eq:main eqn}) in the
following sense:

\[
\int_{0}^{T}\int_{-1}^{1}u\phi_{t}~dxdt+\int\int_{P}\left(|u|^{n}\left(1-x^{2}\right)\left(\left(1-x^{2}\right)u_{x}\right)_{xx}\right)\phi_{x}~dxdt=0
\]

for all $\phi\in Lip(\Omega\times(0,T))$, $\phi=0$ at $t=0$ and
$t=T$. Here $P=\Omega\times(0,T)\backslash\{(x,t)|u=0\}$.

(4) If the initial value $u_{0}$ is non-negative, then 
\[
u\geq0.
\]
\end{thm}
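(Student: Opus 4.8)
The plan is to establish each of the four properties in turn, leveraging the uniform estimates already derived and the uniform convergence in~(\ref{eq:uniformly convergence}). The key observation throughout is that the subsequence $\{\tilde u_{\epsilon,\delta}\}$ converges uniformly on $\Omega\times(0,T)$, so pointwise properties and inequalities that hold uniformly in $\epsilon,\delta$ pass directly to the limit $u$.

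For property~(1), I would simply pass to the limit in the weighted equicontinuity estimates~(\ref{eq:equivcont x}) and~(\ref{eq:equivcont t}). Since these hold with a constant $C_3$ independent of $\epsilon,\delta$, and since $\tilde u_{\epsilon,\delta}\to u$ uniformly, the same bounds hold for $u$, which places $u$ in the weighted H\"older space $C^{1/2,1/8}_{x,t,w}$. For property~(2), the initial condition follows from the convergence of the regularized initial data $u_{\epsilon,\delta0}\to u_0$ together with the uniform convergence in $t$; the boundary conditions~(\ref{eq:BC}) follow by evaluating the weighted H\"older continuity at $x=\pm1$, where the weight $1-x^2$ forces the relevant boundary terms to vanish.

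Property~(3) is the heart of the theorem and where I expect the main obstacle to lie. The strategy is to multiply the regularized equation~(\ref{eq:regularized eqn}) by a test function $\phi\in\mathrm{Lip}(\Omega\times(0,T))$ vanishing at $t=0,T$, integrate over $\Omega\times(0,T)$, and integrate by parts to move the time derivative onto $\phi$ and one spatial derivative off the flux term, obtaining the weak formulation for $u_{\epsilon,\delta}$. I would then let $\epsilon,\delta\to0$ term by term. The first integral $\int\int (u_{\epsilon,\delta})\phi_t$ converges by uniform convergence of $u_{\epsilon,\delta}$ and boundedness of $\phi_t$. The difficulty is the nonlinear flux term: one must show $\left((u_{\epsilon,\delta})^n+\epsilon\right)(1-x^2+\delta)\left((1-x^2+\delta)(u_{\epsilon,\delta})_x\right)_{xx}\to |u|^n(1-x^2)\left((1-x^2)u_x\right)_{xx}$ in a sense strong enough to pass to the limit against $\phi_x$. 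The a priori estimate~(\ref{eq:estimate IBP}) gives a uniform $L^2$ bound on $\sqrt{(u_{\epsilon,\delta})^n+\epsilon}\,(1-x^2+\delta)^{1/2}\left((1-x^2+\delta)(u_{\epsilon,\delta})_x\right)_{xx}$, which yields weak convergence of the highest-order factor on a further subsequence; combined with the strong (uniform) convergence of the lower-order factors $|u_{\epsilon,\delta}|^n$ and the explicit weights, one identifies the product. This is exactly why the integral in~(3) is restricted to the set $P$ where $u\neq0$: on $\{u=0\}$ the degeneracy $|u|^n=0$ kills the flux, so the identification of the weak limit need only be carried out away from the zero set, following the approach of Bernis and Friedman~\cite{bernis1990higher}. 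The delicate point is handling the second spatial derivative inside the degenerate coefficient as $\delta\to0$, ensuring no boundary contributions survive; the weighted boundary conditions are what guarantee this.

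Finally, property~(4) follows from a maximum-principle-type argument on the regularized level. If $u_0\geq0$, the regularized initial data can be chosen non-negative, and the structure of~(\ref{eq:regularized eqn}) (with the strictly positive coefficient $(u_{\epsilon,\delta})^n+\epsilon$ replaced by its lifted version, or via the standard non-negativity-preserving argument for the regularized thin-film equation) keeps $u_{\epsilon,\delta}\geq0$; uniform convergence then gives $u\geq0$. I expect properties~(1),~(2), and~(4) to be routine, with essentially all the analytic work concentrated in the limit passage of~(3), where controlling the doubly-degenerate nonlinear flux against the test function is the crux.
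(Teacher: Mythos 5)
Your treatment of parts (1)--(3) follows essentially the same route as the paper: pass the uniform weighted H\"older estimates (\ref{eq:equivcont x}) and (\ref{eq:equivcont t}) to the limit for (1) and (2), and for (3) test the regularized equation against $\phi$, split off the $\epsilon$- and $\delta$-proportional terms, control them with the \emph{a priori} estimate (\ref{eq:estimate IBP}), and restrict the flux integral to the set $P$ by using the degeneracy of $|u|^{n}$ near $\{u=0\}$. That part of your plan is sound and matches the paper.

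The genuine gap is in part (4). You assert that non-negativity is preserved at the regularized level by ``a maximum-principle-type argument'' or ``the standard non-negativity-preserving argument for the regularized thin-film equation.'' No such argument is available: equation (\ref{eq:regularized eqn}) is a \emph{fourth-order} parabolic equation and admits no comparison or maximum principle, and it is a well-known feature of the Bernis--Friedman regularization that the approximations $u_{\epsilon,\delta}$ generically \emph{do} become negative even for non-negative initial data; only the limit $u$ can be shown non-negative. The correct (and the paper's) argument is an entropy estimate: set $g_{\epsilon}(s)=-\int_{s}^{A}\frac{dr}{|r|^{n}+\epsilon}$ and $G_{\epsilon}(s)=-\int_{s}^{A}g_{\epsilon}(r)\,dr$, multiply (\ref{eq:regularized eqn}) by $g_{\epsilon}(u_{\epsilon,\delta})$ and integrate; since $g_{\epsilon}(u_{\epsilon,\delta})(u_{\epsilon,\delta})_{t}=\bigl(G_{\epsilon}(u_{\epsilon,\delta})\bigr)_{t}$ and the remaining term is a nonnegative square, one obtains the uniform bound $\int_{\Omega}G_{\epsilon}\bigl(u_{\epsilon,\delta}(x,T)\bigr)\,dx<C$. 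If $u(x_{0},t_{0})<0$, uniform convergence forces $u_{\epsilon,\delta}<-\xi$ on a neighborhood of $x_{0}$ for small $\epsilon$, and there $G_{\epsilon}\rightarrow+\infty$ as $\epsilon\rightarrow0$ because $n\geq1$ makes $\int\frac{dr}{|r|^{n}}$ divergent at $r=0$, contradicting the uniform bound. Your proposed route for (4) would fail at its first step, so this entropy argument (or an equivalent one) must replace it.
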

\begin{proof}
Parts $(1)$ and $(2)$ of the theorem can be derived directly from (\ref{eq:uniformly convergence}).
To prove $(3)$, notice that

\[
\int_{0}^{T}\int_{-1}^{1}u_{\epsilon,\delta}\phi_{t}+\left(\left((u_{\epsilon,\delta})^{n}+\epsilon\right)(1-x^{2}+\delta)\left((1-x^{2})(u_{\epsilon,\delta})_{x}\right)_{xx}\right)\phi_{x}~dxdt=0,
\]
so we have

\[
\int_{0}^{T}\int_{-1}^{1}u_{\epsilon,\delta}\phi_{t}~dxdt+\int_{0}^{T}\int_{-1}^{1}\left((u_{\epsilon,\delta})^{n}(1-x^{2})\left((1-x^{2})(u_{\epsilon,\delta})_{x}\right)_{xx}\right)\phi_{x}~dxdt
\]
\begin{equation}
+\epsilon\int_{0}^{T}\int_{-1}^{1}(1-x^{2}+\delta)\left((1-x^{2})(u_{\epsilon,\delta})_{x}\right)_{xx}\phi_{x}~dxdt+\delta\int_{0}^{T}\int_{-1}^{1}(u_{\epsilon,\delta})^{n}\left((1-x^{2})(u_{\epsilon,\delta})_{x}\right)_{xx}\phi_{x}~dxdt=0.\label{eq:theorem proof 1}
\end{equation}
Using equation (\ref{eq:estimate IBP}), we can show that the last
two terms go to $0$ as $\epsilon\rightarrow0$ and $\delta\rightarrow0$.
We also have 
\begin{equation}
\lim_{\xi\rightarrow0}\int_{\{u<\xi\}}(u_{\epsilon,\delta})^{n}(1-x^{2})\left((1-x^{2})(u_{\epsilon,\delta})_{x}\right)_{xx}~dxdt=0.\label{eq:proof 2}
\end{equation}
From (\ref{eq:theorem proof 1}) and (\ref{eq:proof 2}), we can get
(3) of the theorem.

In order to prove (4) of the theorem, we first define function $g_{\epsilon}(s)$
and $G_{\epsilon}(s)$ as following: 
\[
g_{\epsilon}(s)=-\int_{s}^{A}\frac{dr}{|r|^{n}+\epsilon},
\]
\[
G_{\epsilon}(s)=-\int_{s}^{A}g_{\epsilon}(r)dr,
\]
where $A$ is an uniform upper bound for $u_{\epsilon,\delta}$ for
all $\epsilon$ and $\delta$. Now we multiply equation (\ref{eq:regularized eqn})
by $g_{\epsilon}(u_{\epsilon,\delta})$ and integrate over $\Omega\times(0,T)$
to get 
\begin{equation}
\int_{0}^{T}\int_{\Omega}g_{\epsilon}(u_{\epsilon,\delta})\left((u_{\epsilon,\delta})_{t}+\left(\left((u_{\epsilon,\delta})^{n}+\epsilon\right)(1-x^{2}+\delta)\left((1-x^{2}+\delta)(u_{\epsilon,\delta})_{x}\right)_{xx}\right)_{x}\right)dxdt=0.\label{eq:positivity proof integrate}
\end{equation}
Note that 
\[
G_{\epsilon}^{''}(s)=g_{\epsilon}^{'}(s)=\frac{1}{|s|^{n}+\epsilon},
\]
so we have 
\[
g_{\epsilon}(u_{\epsilon,\delta})(u_{\epsilon,\delta})_{t}=\left(G_{\epsilon}\left(u_{\epsilon,\delta}(x,t)\right)\right)_{t}.
\]
After using this and integration by parts, equation (\ref{eq:positivity proof integrate})
becomes 
\[
\int_{\Omega}G_{\epsilon}\left(u_{\epsilon,\delta}(x,T)\right)dx-\int_{\Omega}G_{\epsilon}\left(u_{\epsilon,\delta}(x,0)\right)dx+\int_{0}^{T}\int_{\Omega}\left(\left((1-x^{2}+\delta)(u_{\epsilon,\delta})_{x}\right)_{x}\right)^{2}dxdt=0.
\]
As $u_{\epsilon,\delta}(x,0)$ is bounded, we have 
\begin{equation}
\int_{\Omega}G_{\epsilon}\left(u_{\epsilon,\delta}(x,T)\right)dx<C.\label{eq:positivity proof upper bound}
\end{equation}
Assume there exists a point $(x_{0},t_{0})$ such that $u(x_{0},t_{0})<0$.
Because $u_{\epsilon,\delta}(x,t)$ uniformly converges to $u(x,t)$,
we can choose $\epsilon_{0}>0$ and $\xi>0$ such that 
\[
u_{\epsilon,\delta}(x,t_{0})<-\xi,\ \text{if}\ |x-x_{0}|<\xi,\ \epsilon<\epsilon_{0}.
\]
Then 
\begin{equation}
\lim_{\epsilon\rightarrow0}G_{\epsilon}(u_{\epsilon,\delta}(x,t_{0}))=-\lim_{\epsilon\rightarrow0}\int_{u_{\epsilon,\delta}(x,t_{0})}^{A}g_{\epsilon}(s)~ds\geq-\lim_{\epsilon\rightarrow0}\int_{-\delta}^{0}g_{\epsilon}(s)~ds=\infty.\label{eq:positivity proof divergence}
\end{equation}
The last step is because $\lim_{\epsilon\rightarrow0}g_{\epsilon}(s)=-\infty$
when $s<0$. Since (\ref{eq:positivity proof divergence}) conflicts with
(\ref{eq:positivity proof upper bound}), we have proved (4)
of the theorem. 
\end{proof}

\end{document}